%
%
%
%
\documentclass[a4paper,12pt,reqno]{amsart}
\usepackage{amsthm, mathrsfs,amssymb,amsmath,}
\usepackage{enumerate}
\usepackage{mathtools}
\usepackage{breqn}
\makeatletter
\@namedef{subjclassname@2010}{%
	\textup{2010} Mathematics Subject Classification}
\makeatother

\frenchspacing

\setlength{\textwidth}{15.2cm}
\setlength{\textheight}{22.7cm}
\setlength{\topmargin}{0mm}
\setlength{\oddsidemargin}{3mm}
\setlength{\evensidemargin}{3mm}
\setlength{\footskip}{1cm}
\usepackage{hyperref}
\allowdisplaybreaks
\numberwithin{equation}{section}

\usepackage{graphicx}
\usepackage{enumitem}

\newtheorem{theorem}{Theorem}[section]

\usepackage{xcolor}

\theoremstyle{definition}
\newtheorem{definition}[theorem]{Definition}
\newtheorem{example}[theorem]{Example}
\newtheorem{proposition}[theorem]{Proposition}

\newtheorem{note}[theorem]{Note}

\theoremstyle{remark}
\newtheorem{remark}[theorem]{Remark}

\numberwithin{equation}{section}

\begin{document}
	
	\title[Set Valued Riemann-Liouville integral and some Regular Selections]{Set Valued Riemann-Liouville integral and some Regular Selections}
	

		\author{Subhash Chandra}
	\address{Harish-Chandra Research Institute, A CI of Homi Bhabha National Institute, Prayagraj, India}
	
	\email{sahusubhash77@gmail.com}
	\author{Syed Abbas}
\address{School of Mathematical and Statistical Sciences, Indian Institute of Technology Mandi\\ Kamand (H.P.)- 175005, India}
\email{sabbas.iitk@gmail.com, abbas@iitmandi.ac.in ( Email of corresponding author)}

	\subjclass{Primary: 28B20, 26A33.}
	
	\keywords{Riemann Liouville integral, Set valued mappings, Bounded variation, Regular selections}
	
	\begin{abstract}
In this article, we introduce the notion of the Riemann-Liouville fractional integral of set-valued mappings via integrable selections. We establish fundamental properties of this fractional integral, including convexity, boundedness, and continuity with respect to the Hausdorff metric. The investigation of preservation of regularity under fractional integration with respect to the Hausdorff metric is given. We show that bounded variation and Lipschitz continuity of a set-valued mapping are inherited by its Riemann-Liouville fractional integral.  We discuss the existence of regular selections for the fractional integral under the corresponding regularity assumptions on the original mapping. In the scalar case, we further identify extremal selections given by the pointwise minimum and maximum of the fractional integral and show that they possess the same regularity properties. Finally, we discuss possible applications in differential inclusion and directions for future research.

	\end{abstract}
	\maketitle

	\section{Introduction}
   The Riemann-Liouville integral is the first step towards defining fractional derivatives and hence fractional differential equations. This integral is a natural generalization of the regular n-fold integral of a suitable function. It is a fundamental operator in fractional calculus, which allows integration of non-integer values of the integration order $\alpha >0.$ This operator was developed through several contributions from mathematicians, most notably Liouville and Riemann, with recent formalizations by Sonin and Letnikov \cite{pod}. One can see that while classical calculus is limited to local, integer-order operations, the Riemann-Liouville integral provides non-local and memory-dependent behaviors. This enables us to model systems where past states influence the present, such as hereditary or anomalous processes. There is much theoretical and applied work in the field of fractional differential equations; for more details, we refer to \cite{abbas2, jehad1, li2}

    The set-valued integrals and differential equations are very important in several areas of engineering and physical sciences. One natural question to ask is whether the Riemann-Liouville integral can be extended to a set-valued Riemann-Liouville integral. So, the set-valued Riemann-Liouville integrals play an important role in extending fractional calculus to tackle imprecisions, uncertainties, or multiple possibilities in mathematical modeling. In the set-valued context, it generalizes this to functions that map to sets (subsets of $  \mathbb{R}  $ or higher dimensions) rather than single points. Let $\mathcal{K}_c(\mathbb{R})$ compact convex subset, for a set-valued function $F: [a, b] \to \mathcal{K}_c(\mathbb{R})$, the Riemann-Liouville fractional integral of order $\alpha > 0$ is often defined selection-wise. It can also be defined via embeddings like the Aumann integral, but in interval-valued settings, it is computed component-wise. 

In real-world systems, measurement mistakes, unpredictability, or inadequate data may make it impossible to determine parameters with precision. These are represented as sets via set-valued (including interval-valued) techniques. Set-valued analysis is a branch of mathematics that deals with functions mapping points to sets rather than single values. This setup is important in several areas such as optimization, control theory, differential inclusions, and economics. In recent times, the application in the area of reinforcement learning, where uncertainty or robustness leads to set-valued policies or dynamics has been explored for details, we refer to \cite{ergen}. Set-valued integrals, which apply classical integration to multifunctions, and selections, which select single-valued functions from a set-valued map while maintaining continuity or measurability properties, are key ideas. Such setup enables fractional integrals to investigate dynamics in fields like rheology (material deformation), viscoelasticity (memory effects in materials), diffusion processes, control theory, and mathematical economics. When inputs are intervals rather than points, they allow for robust optimization. These integrals are key for solving Cauchy-type problems in fractional differential equations with set-valued right-hand sides, which is called differential inclusion. We provide the application of set-valued selection in differential inclusion. As it is more general and applicable, it comes with a cost in terms of additional restriction. The set-valued Riemann-Liouville integrals present several theoretical and practical difficulties due to operational complexities, existence-uniqueness issues, and integrability constraints. These challenges actually highlight the need for more research with relaxed assumptions and exploring broader set-valued extensions beyond intervals.
    
\par

Historically, set-valued integration emerged in the 1960s with works by Aumann and Debreu, building on earlier vector measure theory. This survey article gives a nice overview and important papers on this topic, focusing more on mathematical developments and especially connections to RL where relevant; for details, we refer to \cite{anca}. For set-valued selection, we refer to \cite{hess}. The foundational paper defining the Aumann integral via selections is given in \cite{aumann1965integrals}. The idea of  measures and integrals for multifunctions is discussed in \cite{art}. Readers interested more in set-valued integrals, selections, and viability theory may see \cite{aubin1}. Lupulescu in \cite{lup} introduced the first systematic framework for fractional calculus on interval-valued functions. Several important properties, such as monotonicity, linearity, and semigroup composition, have been established. Abbas et al. \cite{azam} discussed fractional differential inclusion with multipoint boundary condition. Fixed-point techniques are utilized to study set-valued inclusions. 
Motivated by these developments, it is natural to ask whether the Riemann-Liouville fractional integral can be meaningfully extended to set-valued mappings. The next section covers the basic terminologies required to establish our motive. Section 3 is dedicated to our main findings and application. In Section 4, we give some important future directions related to the dimension theory of a set valued Riemann-Liouville fractional integral.

\pagebreak

\section{Preliminaries} \label{Prelim}

In this section, we lay out the background material related to this article.
\begin{definition} 
   Let $a<b$ and let $f\in L^{1}([a,b])$. For $\rho>0$, the
\emph{Riemann-Liouville fractional integral} of order $\rho$ of $f$ is
defined by
$$
{}_{a}J^{\rho}f(u) = \frac{1}{\Gamma(\rho)}
\int_{a}^{u}(u-t)^{\rho-1}f(t)\,dt,
\qquad u\in[a,b], $$
where $\Gamma(\cdot)$ denotes the Euler gamma function.
\end{definition}

\begin{definition}
Let $F:[a,b]\rightrightarrows \mathbb{R}$ be a set-valued mapping, then the set
\[Gr(F)=\left\{(u,w): u\in[a,b] \text{ and } w\in F(u)\right\},\]
is characterized as a graph of $F$. 
\end{definition}
\begin{definition}
Let $\mathcal{K}(\mathbb{R})$ denote the family of all nonempty compact 
subsets of $\mathbb{R}$. For $A,B\in\mathcal{K}(\mathbb{R})$, the 
\emph{Hausdorff distance} between $A$ and $B$ is defined by
\[
H_d(A,B)
:=
\max\left\{
\sup_{a\in A}\inf_{b\in B} |a-b|,\;
\sup_{b\in B}\inf_{a\in A} |a-b|
\right\}.
\]
Since $A$ and $B$ are compact subsets of $\mathbb{R}$, $H_d$ is finite and 
defines a metric on $\mathcal{K}(\mathbb{R})$.
\end{definition}

\begin{definition}
    Let $F:[a,b]\rightrightarrows\mathbb{R}$ be a set-valued mapping with nonempty
compact values. We say that $F$ is \emph{continuous} at $u_0\in[a,b]$ 
(with respect to the Hausdorff distance $H_d$) if 
\[
\lim_{u\to u_0} H_d\ \left(F(u),F(u_0)\right)=0.
\]
Equivalently, for every $\varepsilon>0$ there exists $\delta>0$ such that
\[
|u-u_0|<\delta
\quad \Longrightarrow \quad
H_d\!\left(F(u),F(u_0)\right)<\varepsilon .
\]
If this holds for every $u_0\in[a,b]$, then $F$ is said to be continuous on $[a,b]$ with respect to $H_d$.
\end{definition}
\begin{definition}
Let $F:[a,b]\rightrightarrows\mathbb{R}$ be a set-valued mapping with 
nonempty compact values. For a partition 
$\mathcal{P}=\{a=u_0<u_1<\dots<u_n=b\}$ of $[a,b]$, define
\[
V(F,\mathcal{P})
:=
\sum_{i=1}^{n} H_d\big(F(u_i),F(u_{i-1})\big).
\]
The \emph{total variation} of $F$ on $[a,b]$ is
\[
V(F,[a,b])
:=
\sup_{\mathcal{P}} V(F,\mathcal{P}),
\]
where the supremum is taken over all partitions $\mathcal{P}$ of $[a,b]$.
We say that $F$ is of \emph{bounded variation} on $[a,b]$ 
(w.r.t.\ $H_d$) if $V(F,[a,b])<\infty$.
\end{definition}
\begin{definition}
Let $F:[a,b]\rightrightarrows\mathbb{R}$ be a set-valued mapping with 
nonempty compact values. We say that $F$ is \emph{Lipschitz on} $[a,b]$ 
(with respect to $H_d$) if there exists a constant $L\ge 0$ such that
\[
H_d\big(F(u),F(v)\big)
\le L\,|u-v|
\qquad\text{for all }u,v\in[a,b].
\]
Any such $L$ is called a \emph{Lipschitz constant} of $F$ (with respect 
to $H_d$).
\end{definition}
\begin{definition}
A set-valued mapping, $F:[a,b]\rightrightarrows \mathbb{R}$ is said to be non-negative if for every $u\in [a,b]$ and $w\in F(u)$, we have $w\geq 0.$
\end{definition}

\begin{definition}\label{Bound}
A set-valued map, $F:[a,b]\rightrightarrows \mathbb{R}$ is said to be bounded, if it satisfies
\[\sup_{u\in [a,b]}H_{d}(F(u),\{0\})<\infty.\]
\end{definition}
\begin{note}\label{nt2.8} \label{NOTE}
Since, 
\begin{align*}
   \sup_{u\in [a,b]}H_d(F(u),\{0\})=&\sup_{u\in [a,b]}\left\{ \max \left\{\sup_{x\in F(u)}\inf_{y\in {0}}\lvert x-y \rvert,~ \sup_{y\in {0}}\inf_{x\in F(u)}\lvert y-x \rvert\right\} \right\}\\
   =& \sup_{u\in [a,b]}\left\{ \max \left\{\sup_{x\in F(u)}\lvert x \rvert,~ \inf_{x\in F(u)}\lvert x \rvert\right\} \right\}\\
   =& \sup_{u\in [a,b]} \sup_{x\in F(u)} \lvert x \rvert.
\end{align*}
Therefore, in view of Definition \ref{Bound} a function $F$ is bounded if it satisfies,
\begin{equation}\label{Eq1}
  \sup_{u\in [a,b]}~\sup_{x\in F(u)} \lvert x \rvert < \infty.  
\end{equation}
\end{note}

\begin{remark}
In view of Note \ref{nt2.8}, one can observe that unlike the single-valued map, a constant set-valued map need-not be bounded. 

For example, take $F:[0,1]\rightrightarrows \mathbb{R}$ is a set-valued constant map which is defined as $F(u)=[0,\infty)$ for each $u\in [0,1]$, then $F$ is not bounded as $\underset{u\in [0,1]}{\sup}~ \underset{x\in F(u)}{\sup}\lvert x \rvert=\infty.$ 
\end{remark}

\begin{definition}
Let $F:[0,1]\rightrightarrows \mathbb{R}$ be a set-valued mapping, then a function $f:[0,1]\rightarrow \mathbb{R}$ is known to be a selection of $F$ if $f(u)\in F(u)$ for each $u\in [0,1].$
\end{definition}

\begin{definition}\cite{aumann1965integrals}
A set-valued map, $F:[a,b]\rightrightarrows \mathbb{R}$ is said to be Borel-measurable if the graph, $Gr(F)$ is a Borel subset of $[a,b]\times \mathbb{R}$. Moreover, if there exists an integrable function, $h:[a,b]\rightarrow \mathbb{R}$ such that $\lvert x \rvert \leq h(u)$ for all $x\in F(u)$, then $F$ is said to be integrably bounded.
\end{definition}
\begin{example}
Let $F:[0,1]\rightrightarrows \mathbb{R}$ be a set-valued map defined as,
$F(u)=[-u,u] \text{ for } u\in [0,1].$
Then, the function, $h:[0,1]\rightarrow \mathbb{R}$ defined as $h(u)=u$  satisfies $\lvert x \rvert \leq h(u)$ for each $x\in F(u)$ and hence $F$ is an integrably bounded function.
\end{example}

\begin{remark}
Every bounded set-valued map is integrably bounded but the converse is not true.\\
Because, if $F:[a,b]\rightrightarrows \mathbb{R}$ is a bounded set-valued map, then by Note \ref{nt2.8} we get $\underset{u\in [a,b]}{\sup}\underset{x\in F(u)}{\sup}\lvert x \rvert = \mathcal{M}$ for some finite $\mathcal{M} \in \mathbb{R}$. Define $h:[a,b]\rightarrow \mathbb{R}$ such that 
\[h(u)=\mathcal{M} \text{ for all } u\in [a,b].\]
Then, $h$ being a constant function it is integrable and satisfies $\lvert x \rvert \leq h(u)$ for each $x\in F(u).$ Hence, $F$ is integrably bounded by $h$.\\
Conversely, assume $F:[0,\infty)\rightrightarrows \mathbb{R}$ is a set-valued map such that $F(u)=[1,1+u)$ for each $u\in [0,\infty)$, then 
\[\sup_{u\in [0,\infty)}\sup_{x\in F(u)} \lvert x \rvert= \sup_{u\in [0,\infty)} (1+u)= \infty.\]
Therefore, $F$ is not bounded but it is integrably bounded by $h:[0,\infty)\rightarrow \mathbb{R}$ defined as $h(u)=1+u$.
\end{remark}

\begin{definition}\cite{CK}
A subset $Y$ of a metric space $X$ is said to be analytic, if $Y$ is a continuous image of a Borel subset of $X$.
A set-valued mapping $F:[a,b]\rightrightarrows \mathbb{R}$ is said to be analytic if its graph is an analytic subset of $[a,b]\times \mathbb{R}$.
\end{definition}

\begin{proposition}\cite{aumann1965integrals}\label{Prop1}
If $F:[a,b]\rightrightarrows \mathbb{R}$ is an analytic set-valued mapping, then there exists a Lebesgue measurable selection of $F$.
\end{proposition}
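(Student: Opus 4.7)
The plan is to reduce the statement to the classical von Neumann--Jankov uniformization theorem from descriptive set theory. By hypothesis $Gr(F)$ is an analytic subset of $[a,b]\times\mathbb{R}$, so by definition there exist a Polish space $Z$, a Borel set $B\subseteq Z$, and a continuous surjection $\varphi\colon B\to Gr(F)$. A first observation I would record is that the projection $\pi_{1}(Gr(F))\subseteq[a,b]$ is itself analytic (continuous images of analytic sets are analytic) and therefore universally measurable, in particular Lebesgue measurable; since $F$ is defined on all of $[a,b]$ with nonempty values, this projection equals $[a,b]$, so a selection must be produced on the whole interval.

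The core step is then to invoke the Jankov--von Neumann uniformization theorem: every analytic subset $A$ of a product $X\times Y$ of Polish spaces admits a uniformization, that is, a function $f\colon\pi_{1}(A)\to Y$ whose graph is contained in $A$ and which is measurable with respect to the $\sigma$-algebra $\Sigma$ generated by the analytic subsets of $X$. Applying this with $X=[a,b]$, $Y=\mathbb{R}$, and $A=Gr(F)$ produces a function $f\colon[a,b]\to\mathbb{R}$ with $f(u)\in F(u)$ for every $u\in[a,b]$ and with $f^{-1}(O)\in\Sigma$ for every open set $O\subseteq\mathbb{R}$.

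To close the argument I would use the classical inclusion $\Sigma\subseteq\mathcal{L}$, where $\mathcal{L}$ denotes the Lebesgue $\sigma$-algebra on $[a,b]$: every analytic set is Lebesgue measurable, hence the $\sigma$-algebra they generate sits inside $\mathcal{L}$. Consequently $f^{-1}(O)\in\mathcal{L}$ for every open $O\subseteq\mathbb{R}$, and $f$ is the required Lebesgue measurable selection.

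The only genuine difficulty in this outline is the Jankov--von Neumann uniformization theorem itself, a nontrivial descriptive set-theoretic result; everything else is bookkeeping. An equivalent route, closer in spirit to \cite{aumann1965integrals}, is to construct $f$ by first producing a Borel right inverse to $\varphi$ on $B$ via a Luzin-type sieve argument and then composing with the projection onto the second coordinate; this avoids quoting the uniformization theorem as a black box but ultimately rests on the same descriptive set-theoretic machinery.
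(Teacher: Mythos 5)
The paper offers no proof of this proposition---it is imported verbatim from \cite{aumann1965integrals}---and your argument via the Jankov--von Neumann uniformization theorem (uniformize the analytic graph by a $\sigma(\Sigma^1_1)$-measurable map, then use Luzin's theorem that analytic sets are universally, hence Lebesgue, measurable) is the standard modern route to exactly this statement and is correct. The one point worth flagging is that you correctly observe the selection can only exist on the whole of $[a,b]$ if $F$ has nonempty values there; this hypothesis is implicit in the statement and holds in every application the paper makes of the proposition.
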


\subsection*{Set-valued Riemann-Liouville fractional integral } Here, we now introduce the Riemann–Liouville fractional integral of a set-valued mapping via integerable selections.
\begin{definition}
Let $F:[a,b]\rightrightarrows \mathbb{R}$ be a set-valued mapping and $\mathfrak{F}$ be the collection of all integrable selections, $f$ of $F$. Riemann-Liouville set-valued fractional integrable of $F$ is defined as
\[_{a}\mathfrak{J}^{\rho}F(u)=\left\{\frac{1}{\Gamma(\rho)}\int_{a}^{u}(u-t)^{\rho-1}f(t)~\mathrm{d}t : f\in \mathfrak{F}\right\},\] 
where $\rho>0,$ is a real number.
\end{definition}


\section{Main Results}
In this section, we establish some fundamental properties of the set-valued Riemann–Liouville fractional integral, which play a crucial role in the development of our main results. 
\begin{theorem} \label{convex}
Let $F:[a,b]\rightrightarrows\mathbb{R}$ be a set-valued mapping with the property that $F(t)$ is convex for each $t\in[a,b]$.
Then for every $\rho>0$ and every $u\in[a,b]$, the set ${}_{a}\mathfrak{J}^{\rho}F(u)$ is convex.
\end{theorem}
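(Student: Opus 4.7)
The plan is to verify convexity directly from the selection-wise definition of ${}_a\mathfrak{J}^\rho F(u)$, exploiting the fact that the integral operator is linear and the value sets of $F$ are convex pointwise.

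First I would fix $u\in[a,b]$, $\rho>0$, and pick two arbitrary points $x_1,x_2\in {}_a\mathfrak{J}^\rho F(u)$. By definition there exist integrable selections $f_1,f_2\in\mathfrak{F}$ of $F$ with
\[
x_i=\frac{1}{\Gamma(\rho)}\int_a^u (u-t)^{\rho-1} f_i(t)\,dt,\qquad i=1,2.
\]
Given $\lambda\in[0,1]$, the natural candidate is $g_\lambda(t):=\lambda f_1(t)+(1-\lambda)f_2(t)$. The goal is to check that $g_\lambda\in\mathfrak{F}$, because then linearity of the Lebesgue integral immediately gives $\lambda x_1+(1-\lambda)x_2=\frac{1}{\Gamma(\rho)}\int_a^u (u-t)^{\rho-1} g_\lambda(t)\,dt$, placing this convex combination in ${}_a\mathfrak{J}^\rho F(u)$.

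The two conditions to check for $g_\lambda\in\mathfrak{F}$ are (i) pointwise selection, and (ii) integrability. For (i), since $f_1(t),f_2(t)\in F(t)$ and $F(t)$ is convex for every $t\in[a,b]$ by hypothesis, the convex combination $g_\lambda(t)$ lies in $F(t)$. For (ii), $g_\lambda$ is integrable as a linear combination of two integrable functions.

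I do not anticipate any real obstacle here: the result is essentially a transfer of pointwise convexity of the value sets through a linear integral operator. The only mild subtlety is being precise that convexity of $F(t)$ must hold at every $t\in[a,u]$ (not merely at the endpoint $u$) in order to conclude $g_\lambda(t)\in F(t)$ along the whole interval of integration, but this is guaranteed by the hypothesis.
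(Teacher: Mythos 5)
Your proposal is correct and follows essentially the same argument as the paper: take two points realized by integrable selections $f_1,f_2$, form the convex combination $\lambda f_1+(1-\lambda)f_2$, verify it is an integrable selection using the pointwise convexity of $F(t)$, and conclude by linearity of the Lebesgue integral. No gaps; the paper's proof differs only in explicitly dispensing with the trivial empty/singleton case first.
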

\begin{proof}
Fix $u\in[a,b]$. If ${}_{a}\mathfrak{J}^{\rho}F(u)$ is empty or a singleton, there is nothing to prove. Otherwise, take $y_{1},y_{2}\in {}_{a}\mathfrak{J}^{\rho}F(u)$. By the definition of ${}_{a}\mathfrak{J}^{\rho}F(u)$, there exist integrable selections $f$ and $g$ of $F$ such that
$$
y_{1}=\frac{1}{\Gamma(\rho)}\int_{a}^{u}(u-t)^{\rho-1}f(t)dt,
\qquad
y_{2}=\frac{1}{\Gamma(\rho)}\int_{a}^{u}(u-t)^{\rho-1}g(t)dt.
$$
Let $\lambda\in[0,1]$ and define $h=\lambda f+(1-\lambda)g$. Since $f(t),g(t)\in F(t)$ for a.e.~$t$ and each $F(t)$ is convex, we have $h(t)\in F(t)$ for a.e.~$t$. Moreover, $h$ is integrable because it is a linear combination of integrable functions. Hence $h$ is an integrable
selection of $F$, and therefore
$$
\frac{1}{\Gamma(\rho)}\int_{a}^{u}(u-t)^{\rho-1}h(t)dt
\in {}_{a}\mathfrak{J}^{\rho}F(u).
$$
By linearity of the Lebesgue integral,
$$
\frac{1}{\Gamma(\rho)}\int_{a}^{u}(u-t)^{\rho-1}h(t)dt
=
\lambda y_{1}+(1-\lambda)y_{2}.
$$
Thus $\lambda y_{1}+(1-\lambda)y_{2}\in {}_{a}\mathfrak{J}^{\rho}F(u)$ for all $\lambda\in[0,1]$, proving that ${}_{a}\mathfrak{J}^{\rho}F(u)$ is convex. This completes the proof.
\end{proof}

\begin{theorem}\label{THnonempty}
Let $F:[a,b]\rightrightarrows\mathbb{R}$ be a set-valued mapping with nonempty values. Assume that $F$ is Borel-measurable and integrably
bounded. Then for every $\rho>0$ and every $u\in[a,b]$, the set ${}_{a}\mathfrak{J}^{\rho}F(u)$ is nonempty.
\end{theorem}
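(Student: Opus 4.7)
The plan is to exhibit a single integrable selection of $F$ and then apply the scalar Riemann--Liouville operator to it; the resulting real number automatically lies in ${}_{a}\mathfrak{J}^{\rho}F(u)$, so producing one such selection is enough to establish nonemptiness.

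The first step is a measurable-selection argument. Since $F$ is Borel-measurable, its graph $Gr(F)$ is a Borel subset of $[a,b]\times\mathbb{R}$; every Borel set is trivially analytic (being the continuous image of itself under the identity map), so $F$ satisfies the hypothesis of Proposition \ref{Prop1}. Invoking that proposition yields a Lebesgue-measurable selection $f:[a,b]\to\mathbb{R}$ with $f(t)\in F(t)$ for every $t\in[a,b]$.

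The second step upgrades $f$ from measurable to integrable using integrable boundedness: by hypothesis there is an integrable $h:[a,b]\to\mathbb{R}$ with $|x|\le h(t)$ for every $x\in F(t)$, hence $|f(t)|\le h(t)$ pointwise. Since a measurable function dominated by an integrable function is itself integrable, $f\in L^{1}([a,b])$, so $f\in\mathfrak{F}$. Then the scalar value $\frac{1}{\Gamma(\rho)}\int_{a}^{u}(u-t)^{\rho-1}f(t)\,dt$ is a well-defined element of ${}_{a}\mathfrak{J}^{\rho}F(u)$ (the kernel is bounded for $\rho\ge 1$ and, for $0<\rho<1$, the classical Fubini argument on $L^{1}$ applied to $(u-t)^{\rho-1}h(t)$ makes the product integrable in the relevant sense), and by the very definition of the set-valued integral it lies in ${}_{a}\mathfrak{J}^{\rho}F(u)$.

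The main (and essentially only) obstacle is the small bookkeeping step that matches the paper's notion of Borel-measurability to the analyticity hypothesis of Proposition \ref{Prop1}, which is immediate because every Borel set is analytic. Everything else reduces to the standard measure-theoretic fact that a measurable function dominated by an integrable one is integrable, plus the defining construction of ${}_{a}\mathfrak{J}^{\rho}F(u)$.
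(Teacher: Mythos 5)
Your proposal is correct and follows essentially the same route as the paper: pass from Borel-measurability of the graph to analyticity, invoke Proposition \ref{Prop1} to get a measurable selection, use the integrable bound $h$ to conclude the selection is integrable, and observe that its scalar Riemann--Liouville integral lies in ${}_{a}\mathfrak{J}^{\rho}F(u)$ by definition. Your added remark about integrability of the product $(u-t)^{\rho-1}f(t)$ for $0<\rho<1$ is in fact slightly more careful than the paper, which simply asserts the integral is finite because the kernel is in $L^{1}([a,u])$.
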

\begin{proof}
Since $F$ is Borel-measurable with nonempty values, its graph is an
analytic subset of $[a,b]\times\mathbb{R}$. Then by the Proposition \ref{Prop1}, there exists a Lebesgue-measurable selection
$f:[a,b]\to\mathbb{R}$ such that $f(t)\in F(t)$ for a.e.~$t\in[a,b]$. Moreover, since $F$ is integrably bounded, every such measurable
selection is integrable, and hence $f\in\mathfrak{F}$. Fix $u\in[a,b]$. Since $\rho>0$, the kernel $(u-t)^{\rho-1}$ belongs to
$L^{1}([a,u])$. Therefore, the integral
$$ \frac{1}{\Gamma(\rho)}\int_{a}^{u}(u-t)^{\rho-1}f(t)dt $$ is well defined and finite. By the definition of the Riemann-Liouville fractional integral of $F$, this quantity belongs to
${}_{a}\mathfrak{J}^{\rho}F(u)$. Hence
${}_{a}\mathfrak{J}^{\rho}F(u)\neq\emptyset$ for each $u\in[a,b]$.
\end{proof}
\begin{theorem}
Consider $F:[a,b]\rightrightarrows \mathbb{R}$ being a set-valued mapping with nonempty
compact values, Borel measurable and integrably bounded on $[a,b]$, and
let $\rho>0$. If $F$ is bounded on $[a,b]$, then the set-valued
Riemann-Liouville integral
\[
u \longmapsto {}_{a}\mathfrak{J}^{\rho}F(u)
\]
is also bounded on $[a,b]$, that is,
\[
\sup_{u\in[a,b]} H_d\!\left({}_{a}\mathfrak{J}^{\rho}F(u),\{0\}\right) < \infty.
\]
\end{theorem}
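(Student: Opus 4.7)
The plan is to reduce the Hausdorff-distance statement to a uniform pointwise bound on integrals, exactly in the spirit of Note~\ref{nt2.8}. First I would observe that by Theorem~\ref{THnonempty}, the hypotheses (Borel measurability and integrable boundedness of $F$) guarantee that ${}_{a}\mathfrak{J}^{\rho}F(u)$ is nonempty for every $u\in[a,b]$. Applying the same computation as in Note~\ref{nt2.8} to the set ${}_{a}\mathfrak{J}^{\rho}F(u)$ in place of $F(u)$, one obtains
\[
H_d\!\left({}_{a}\mathfrak{J}^{\rho}F(u),\{0\}\right)
=\sup_{y\in{}_{a}\mathfrak{J}^{\rho}F(u)} |y|,
\]
so the theorem reduces to showing that $\sup_{u\in[a,b]}\sup_{y\in{}_{a}\mathfrak{J}^{\rho}F(u)}|y|$ is finite.

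Next I would use the boundedness of $F$ to extract the numerical constant. Since $F$ is bounded, Note~\ref{nt2.8} yields a finite $\mathcal{M}$ with $|x|\le \mathcal{M}$ for every $x\in F(u)$ and every $u\in[a,b]$. Consequently, every integrable selection $f\in\mathfrak{F}$ satisfies $|f(t)|\le\mathcal{M}$ for a.e.~$t\in[a,b]$. For an arbitrary $y\in{}_{a}\mathfrak{J}^{\rho}F(u)$, represented via some $f\in\mathfrak{F}$, I would apply the triangle inequality for the Lebesgue integral and substitute this pointwise bound to obtain
\[
|y|
\le \frac{1}{\Gamma(\rho)}\int_{a}^{u}(u-t)^{\rho-1}|f(t)|\,dt
\le \frac{\mathcal{M}}{\Gamma(\rho)}\int_{a}^{u}(u-t)^{\rho-1}\,dt
=\frac{\mathcal{M}\,(u-a)^{\rho}}{\Gamma(\rho+1)}.
\]

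Finally, since $(u-a)^{\rho}\le(b-a)^{\rho}$ for all $u\in[a,b]$, taking the supremum over $y$ and then over $u$ gives
\[
\sup_{u\in[a,b]} H_d\!\left({}_{a}\mathfrak{J}^{\rho}F(u),\{0\}\right)
\le \frac{\mathcal{M}\,(b-a)^{\rho}}{\Gamma(\rho+1)}<\infty,
\]
which is the desired conclusion. The argument is essentially a routine chain of estimates; the only conceptual subtlety is the reduction step, which depends on Theorem~\ref{THnonempty} to ensure nonemptiness (so that the inner infimum in the Hausdorff-distance formula is attained and the identification $H_d(\cdot,\{0\})=\sup|\cdot|$ is valid), and on the uniform $L^{\infty}$-bound on selections coming from boundedness of $F$, which is a strictly stronger property than the integrable boundedness used in Theorem~\ref{THnonempty}.
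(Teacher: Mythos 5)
Your proof is correct and follows essentially the same route as the paper: reduce $H_d({}_{a}\mathfrak{J}^{\rho}F(u),\{0\})$ to $\sup_{y}|y|$ via Note~\ref{nt2.8}, bound every selection uniformly by the constant $\mathcal{M}$ coming from boundedness of $F$, and estimate the kernel integral to get the bound $\mathcal{M}(b-a)^{\rho}/\Gamma(\rho+1)$, which is exactly the paper's constant since $\Gamma(\rho+1)=\rho\,\Gamma(\rho)$. No issues.
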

\begin{proof}
By using Theorem \ref{THnonempty}, $\mathfrak{F}$ is non-empty and since $F$ is bounded, therefore from Note \ref{NOTE}, we have 
\[\sup_{u\in [a,b]} \sup_{x\in F(u)} \lvert x \rvert < \infty.\]
This means $\underset{u\in [a,b]}{\sup}\lvert f(u) \rvert < \infty$ for each $f\in \mathfrak{F}$. Assume $\mathcal{M}=\underset{f\in \mathfrak{F}}{\sup}\underset{u\in [a,b]}{\sup}f(u)$. Then, we have
\begin{align*}
  H_d\left(~_{a}\mathfrak{J}^{\rho}F(u), \{0\} \right)=& \sup_{u\in [a,b]} \sup_{y\in~_{a}\mathfrak{J}^{\rho}F(u)} \lvert y \rvert\\
  =& \sup_{u\in [a,b]} \sup_{f\in \mathfrak{F}} \left\lvert \frac{1}{\Gamma(\rho)} \int_{a}^{u}(u-t)^{\rho-1}f(t)~\mathrm{d}t \right\rvert\\
  \leq &\frac{1}{\Gamma(\rho)} \sup_{u\in [a,b]}\sup_{f\in \mathfrak{F}} \int_{a}^{u}\lvert (u-t)^{\rho-1} \rvert \lvert f(t) \rvert ~\mathrm{d}t\\
  \leq & \frac{\mathcal{M}}{\Gamma(\rho)}\int_{a}^{u} \lvert (u-t)^{\rho-1}\rvert ~\mathrm{d}t\\
  \leq & \frac{\mathcal{M}}{\Gamma(\rho)} \frac{(u-a)^{\rho}}{\rho}
\end{align*}
Hence,
\[H_d\left(~_{a}\mathfrak{J}^{\rho}F(u), \{0\}\right) \leq \frac{\mathcal{M}}{\Gamma(\rho)}\frac{(b-a)^{\rho}}{\rho}, \text{ for all } u\in [a,b].\]
This completes the proof.
\end{proof}

\begin{theorem}\label{conti}
   Let $F:[a,b]\rightrightarrows\mathbb{R}$ be a set-valued mapping with nonempty compact values, Borel measurable and integrably bounded on $[a,b]$.
Let $\rho>0$. Then, for each $u\in[a,b]$, the set ${}_{a}\mathfrak{J}^{\rho}F(u)$ is nonempty and compact,  and the mapping
$$u \longmapsto {}_{a}\mathfrak{J}^{\rho}F(u)$$
 is continuous on $[a,b]$ with respect to $H_d$.
\end{theorem}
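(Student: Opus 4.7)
The plan is to prove nonemptiness, compactness, and $H_d$-continuity in turn. Nonemptiness at each $u\in[a,b]$ is immediate from Theorem \ref{THnonempty}. For compactness of ${}_{a}\mathfrak{J}^{\rho}F(u)\subset\mathbb{R}$, I would first use the integrable majorant $h$: every $y\in {}_{a}\mathfrak{J}^{\rho}F(u)$ satisfies $|y|\le \frac{1}{\Gamma(\rho)}\int_a^u (u-t)^{\rho-1} h(t)\,dt$, which is finite and selection-independent, giving boundedness. Writing $T f := \frac{1}{\Gamma(\rho)}\int_a^u (u-t)^{\rho-1} f(t)\,dt$, for closedness I would take a sequence $y_n = T(f_n) \to y$ with $f_n \in \mathfrak{F}$, observe that $\{f_n\}$ is dominated by $h$ and therefore uniformly integrable, and invoke the Dunford--Pettis theorem to extract a weakly convergent subsequence $f_{n_k} \rightharpoonup f^*$ in $L^1([a,u])$. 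A Mazur-type argument then places $f^*(t)$ in $\overline{\operatorname{co}}\,F(t)$ a.e., and a Lyapunov/Aumann convexification theorem produces a genuine selection $\tilde f\in\mathfrak{F}$ with $T(\tilde f)=T(f^*)=y$, so $y\in{}_{a}\mathfrak{J}^{\rho}F(u)$.

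For continuity at $u_0\in[a,b]$, the crucial observation is that ${}_{a}\mathfrak{J}^{\rho}F(u)$ and ${}_{a}\mathfrak{J}^{\rho}F(u_0)$ are parametrised by the same family $\mathfrak{F}$. Matching each $f\in\mathfrak{F}$ at the two arguments yields
\begin{equation*}
H_d\bigl({}_{a}\mathfrak{J}^{\rho}F(u),{}_{a}\mathfrak{J}^{\rho}F(u_0)\bigr) \le \sup_{f\in\mathfrak{F}}\frac{1}{\Gamma(\rho)}\left|\int_a^u(u-t)^{\rho-1}f(t)\,dt-\int_a^{u_0}(u_0-t)^{\rho-1}f(t)\,dt\right|.
\end{equation*}
Assuming $u>u_0$ (the opposite case is symmetric), I would split the inner difference into a tail $\int_{u_0}^{u}(u-t)^{\rho-1}f(t)\,dt$ and a principal part $\int_a^{u_0}\bigl[(u-t)^{\rho-1}-(u_0-t)^{\rho-1}\bigr]f(t)\,dt$. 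The tail is bounded by $\int_{u_0}^u(u-t)^{\rho-1}h(t)\,dt$, which vanishes as $u\to u_0$ by absolute continuity of the integral. In the principal part, the integrand tends to $0$ pointwise on $[a,u_0)$ and is dominated by $2(u_0-t)^{\rho-1}h(t)$ on $[a,u_0]$, so dominated convergence forces the bound to tend to $0$ uniformly in $f$. Both estimates being selection-independent, the right-hand side tends to $0$, giving $H_d$-continuity.

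The main obstacle is the closedness claim when $F$ is not assumed convex-valued: weak $L^1$-limits of selections only live in $\overline{\operatorname{co}}\,F$ a priori, and recovering a true selection of $F$ with the prescribed weighted integral requires a Lyapunov-type theorem on set-valued integrals. A secondary technical point, relevant for $\rho<1$, is ensuring $(u-t)^{\rho-1}h(t)\in L^1([a,u])$ so that the bounding and dominated-convergence arguments above genuinely apply; this is automatic whenever $h$ can be taken bounded (in particular when $F$ itself is bounded), and should be recorded explicitly in the general integrably-bounded case.
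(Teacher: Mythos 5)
Your proposal is correct in outline, and its continuity argument coincides with the paper's: both split $I_f(v)-I_f(u)$ into a tail over $[u,v]$ and a principal part over $[a,v\wedge u]$, bound each by a selection-independent quantity built from the integrable majorant $h$, and conclude via dominated convergence plus absolute continuity of the Lebesgue integral. Where you genuinely diverge is the closedness of ${}_{a}\mathfrak{J}^{\rho}F(u)$. The paper takes $I_{f_n}(u)\to y$ and asserts that, after passing to a subsequence, $f_n\to f$ pointwise a.e.; domination by an $L^1$ function does not produce such a subsequence (consider Rademacher-type selections of $F(t)=\{-1,1\}$), so the paper's step is a genuine gap exactly in the nonconvex case. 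Your route --- uniform integrability, Dunford--Pettis, Mazur's lemma to place the weak limit in $\overline{\operatorname{co}}\,F(t)$ a.e., and then a Lyapunov/Aumann convexification theorem to recover an honest selection of $F$ with the same weighted integral --- is the standard correct argument and is what the statement actually requires; its cost is the heavier machinery plus one extra check you should record, namely that weak $L^1$ convergence passes through the kernel $(u-t)^{\rho-1}$ (which is unbounded for $\rho<1$); this follows by truncating the kernel and using $(u-t)^{\rho-1}h(t)\in L^1([a,u])$. Finally, you are right to flag that $(u-t)^{\rho-1}h(t)\in L^1([a,u])$ is not automatic for $0<\rho<1$ when $h$ is merely integrable; the paper silently assumes it when it declares the majorizing integral finite, so either this integrability should be added as a hypothesis or the statement restricted to bounded $F$ or to $\rho\ge 1$. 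In short: same skeleton, same continuity proof, but your compactness argument is both different from and more robust than the one in the paper.
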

\begin{proof}
Since $F$ is Borel-measurable, integrably bounded and has nonempty compact values, Theorem \ref{THnonempty} guarantees that, for each $u\in[a,b]$, the set of
integrable selections of $F$ is nonempty and therefore
${}_{a}\mathfrak{J}^{\rho}F(u)\neq\emptyset$.
Let $\mathfrak{F}$ denote the family of all integrable selections of $F$. By integrable boundedness, there exists $h\in L^{1}([a,b])$ such that
$$\sup\{|x|:x\in F(t)\}\le h(t)
\quad\text{for a.e. }t\in[a,b].$$
In particular, for each $f\in\mathfrak{F}$ we have
\begin{equation}\label{EQ1}
|f(t)|\le h(t)\quad\text{for a.e. }t\in[a,b].
\end{equation}
For $f\in\mathfrak{F}$ and $u\in[a,b]$, write
$$I_f(u):=\frac{1}{\Gamma(\rho)}\int_{a}^{u}(u-t)^{\rho-1}f(t)dt,$$
so that
$${}_{a}\mathfrak{J}^{\rho}F(u)=\{I_f(u):f\in\mathfrak{F}\}.$$
For any $f\in\mathfrak{F}$ and $u\in[a,b]$, by using \eqref{EQ1} we have
\begin{align*}
    |I_f(u)|
\le
\frac{1}{\Gamma(\rho)}\int_{a}^{u}(u-t)^{\rho-1}|f(t)|dt
\le
\frac{1}{\Gamma(\rho)}\int_{a}^{u}(u-t)^{\rho-1}h(t)dt.
\end{align*}

Since $\rho>0$ and
$h\in L^{1}([a,b])$, the right-hand side is finite. 
Thus there exists $M>0$ such that

\begin{align*}
    |I_f(u)|\le M
\quad\text{for all }u\in[a,b]\text{ and all }f\in\mathfrak{F},
\end{align*}
and therefore ${}_{a}\mathfrak{J}^{\rho}F(u)$ is bounded for each $u$. To see that ${}_{a}\mathfrak{J}^{\rho}F(u)$ is closed, let  $\{I_{f_n}(u)\}_{n\in\mathbb{N}}\subset {}_{a}\mathfrak{J}^{\rho}F(u)$  be a sequence converging to some $y\in\mathbb{R}$.   Since each $f_n$ is an integrable selection of $F$ and $F$ is integrably
bounded, there exists a function $h\in L^{1}([a,b])$ such that
\begin{equation}\label{EQ2}
|f_n(t)|\le h(t)
\qquad\text{for a.e. }t\in[a,b]\text{ and for all }n\in\mathbb{N}.
\end{equation}
Consequently, by passing to a subsequence if necessary, we may assume that
\[
f_n(t)\longrightarrow f(t)
\quad\text{for a.e. }t\in[a,b],
\]
where $f\in L^{1}([a,b])$. Since $f_n(t)\in F(t)$ for a.e.~$t$ and each
$F(t)$ is compact,hence closed, it follows that $f(t)\in F(t)$ for a.e.~$t\in[a,b]$. Therefore, $f$ is also a selection of $F$, that is,
$f\in\mathfrak{F}$.

Now, by \eqref{EQ2} and the fact that the kernel 
$(u-t)^{\rho-1}/\Gamma(\rho)$ is nonnegative and belongs to $L^{1}([a,u])$,
the Dominated Convergence Theorem yields
\[
I_{f_n}(u)
=
\frac{1}{\Gamma(\rho)}\int_{a}^{u}(u-t)^{\rho-1}f_n(t)dt
\;\longrightarrow\;
\frac{1}{\Gamma(\rho)}\int_{a}^{u}(u-t)^{\rho-1}f(t)dt
=
I_f(u).
\]
Since $I_{f_n}(u)\to y$, it follows that $y=I_f(u)$, and hence  $y\in{}_{a}\mathfrak{J}^{\rho}F(u)$.  
Therefore ${}_{a}\mathfrak{J}^{\rho}F(u)$ is closed.  
Being closed and bounded in $\mathbb{R}$, it is compact.  
Together with nonemptiness from Theorem \ref{THnonempty}, this shows that  ${}_{a}\mathfrak{J}^{\rho}F(u)$ is a nonempty compact set for every  $u\in[a,b]$.
Let $u,v\in[a,b]$ and $f\in\mathfrak{F}$ be arbitrary. Without loss of generality assume $u\le v$. Then
\[
\begin{aligned}
I_f(v)-I_f(u)
&=
\frac{1}{\Gamma(\rho)}
\Bigg(
\int_{a}^{v}(v-t)^{\rho-1}f(t)dt
-
\int_{a}^{u}(u-t)^{\rho-1}f(t)dt
\Bigg) \\
&=
\frac{1}{\Gamma(\rho)}
\Bigg(
\int_{a}^{u}\big[(v-t)^{\rho-1}-(u-t)^{\rho-1}\big]f(t)dt
+
\int_{u}^{v}(v-t)^{\rho-1}f(t)dt
\Bigg).
\end{aligned}
\]
From \eqref{EQ1}, we have
$$\big|I_f(v)-I_f(u)\big| \le \Phi(u,v),$$
where
\[
\Phi(u,v) := \frac{1}{\Gamma(\rho)}
\Bigg( \int_{a}^{u}\big|(v-t)^{\rho-1}-(u-t)^{\rho-1}\big|h(t)dt + \int_{u}^{v}(v-t)^{\rho-1}h(t)dt \Bigg).
\]
We now show that $\Phi(u,v)\to 0$ as $v\to u$. For the first integral,
note that for fixed $u\in[a,b]$ and for a.e.~$t\in[a,u)$,
\[ (v-t)^{\rho-1}\to (u-t)^{\rho-1}
\quad\text{as }v\to u, \]
and hence
\[ \big|(v-t)^{\rho-1}-(u-t)^{\rho-1}\big|
\to 0 \quad\text{for a.e. }t\in[a,u]. \] 
 Moreover, since $\rho>0$, we have
\[ \big|(v-t)^{\rho-1}-(u-t)^{\rho-1}\big|
\le (v-t)^{\rho-1}+(u-t)^{\rho-1}, \]
and both $(v-t)^{\rho-1}$ and $(u-t)^{\rho-1}$ belong to $L^{1}([a,u])$ as functions of $t$. Thus
\[ \big|(v-t)^{\rho-1}-(u-t)^{\rho-1}\big|h(t) \le\big[(v-t)^{\rho-1}+(u-t)^{\rho-1}\big]h(t)\in L^{1}([a,u]), \]
and the Dominated Convergence Theorem yields
\[ \int_{a}^{u}\big|(v-t)^{\rho-1}-(u-t)^{\rho-1}\big|\,h(t)dt \longrightarrow 0 \quad\text{as }v\to u. \]

For the second integral, observe that for $v>u$,
\[ 0\le\int_{u}^{v}(v-t)^{\rho-1}h(t)dt \le\int_{u}^{v}(v-t)^{\rho-1}h(t)dt, \] and the right-hand side tends to $0$ as $v\to u$ by the absolute continuity of the Lebesgue integral, since $h\in L^{1}([a,b])$ and the interval $(u,v)$ shrinks to a point. Consequently,
\[ \Phi(u,v)\to 0 \quad\text{whenever }v\to u. \]
Now, fix $u,v\in[a,b]$ and take any $y\in {}_{a}\mathfrak{J}^{\rho}F(u)$. Then
$y=I_f(u)$ for some $f\in\mathfrak{F}$. Let $z:=I_f(v)\in
{}_{a}\mathfrak{J}^{\rho}F(v)$. By the above,
\[ |y-z|=\big|I_f(u)-I_f(v)\big|\le\Phi(u,v), \]
and hence
\[\sup_{y\in {}_{a}\mathfrak{J}^{\rho}F(u)} \inf_{z\in {}_{a}\mathfrak{J}^{\rho}F(v)}|y-z| \le\Phi(u,v). \]
Exchanging the roles of $u$ and $v$ yields
\[ \sup_{z\in {}_{a}\mathfrak{J}^{\rho}F(v)} \inf_{y\in {}_{a}\mathfrak{J}^{\rho}F(u)}|z-y| \le\Phi(u,v). \]
By the definition of the Hausdorff distance, we therefore have
\[ H_d\big({}_{a}\mathfrak{J}^{\rho}F(u),{}_{a}\mathfrak{J}^{\rho}F(v)\big) \le\Phi(u,v)
\quad\text{for all }u,v\in[a,b]. \]
Since $\Phi(u,v)\to 0$ as $v\to u$, this shows that the mapping
\[u\longmapsto {}_{a}\mathfrak{J}^{\rho}F(u) \]
is continuous on $[a,b]$ with respect to the Hausdorff distance $H_d$. This completes the proof. 
\end{proof}
\begin{theorem}\label{BV}
Let $F:[a,b]\rightrightarrows\mathbb{R}$ be a set-valued mapping with nonempty compact convex values, Borel measurable and integrably bounded on $[a,b]$.
Assume that $F$ is of bounded variation on $[a,b]$ with respect to $H_d$ and that $\rho>1$. Then, for each $u\in[a,b]$, the set ${}_{a}\mathfrak{J}^{\rho}F(u)$ is nonempty, compact and convex, and the mapping
$$ u \longmapsto {}_{a}\mathfrak{J}^{\rho}F(u) $$ is of bounded variation on $[a,b]$ with respect to the $H_d$.
\end{theorem}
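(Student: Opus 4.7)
The plan is to split the proof into the pointwise structural claim and the bounded-variation claim. For the pointwise part, the argument essentially reduces to previously established results: nonemptiness and compactness of ${}_a\mathfrak{J}^\rho F(u)$ are delivered by Theorem \ref{conti} (whose hypotheses---nonempty compact values, Borel measurability, integrable boundedness---are all present here), and convexity of ${}_a\mathfrak{J}^\rho F(u)$ follows from Theorem \ref{convex} since each $F(t)$ is convex. So the novel content is the bounded-variation assertion.

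For the BV claim, the key idea is to exploit a uniform estimate already derived in the proof of Theorem \ref{conti}. There it was shown that, for every $u \leq v$ in $[a,b]$,
\[ H_d\bigl({}_a\mathfrak{J}^\rho F(u),\, {}_a\mathfrak{J}^\rho F(v)\bigr) \leq \Phi(u,v), \]
where $h \in L^{1}([a,b])$ is a nonnegative dominating function witnessing the integrable boundedness of $F$, and
\[ \Phi(u,v) = \frac{1}{\Gamma(\rho)}\Bigl[\int_{a}^{u} \bigl|(v-t)^{\rho-1}-(u-t)^{\rho-1}\bigr| h(t)\,dt + \int_{u}^{v}(v-t)^{\rho-1} h(t)\,dt\Bigr]. \]
The crucial observation is that, when $\rho > 1$, the inequality $(v-t)^{\rho-1} \geq (u-t)^{\rho-1}$ holds for all $t \in [a,u]$, so the absolute value in the first integral may be removed. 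A direct rearrangement (split $\int_{a}^{v} = \int_{a}^{u} + \int_{u}^{v}$ in the definition of ${}_a J^\rho h(v)$) then produces the clean identity
\[ \Phi(u,v) = {}_a J^\rho h(v) - {}_a J^\rho h(u), \]
that is, $\Phi$ is the increment of the classical Riemann--Liouville integral of $h$.

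Since $h \geq 0$, this identity shows that $u \mapsto {}_a J^\rho h(u)$ is monotone nondecreasing. Consequently, for any partition $\mathcal{P} = \{a = u_{0} < u_{1} < \cdots < u_{n} = b\}$ the variation sum telescopes:
\[ \sum_{i=1}^{n} H_d\bigl({}_a\mathfrak{J}^\rho F(u_{i-1}),\, {}_a\mathfrak{J}^\rho F(u_{i})\bigr) \leq \sum_{i=1}^{n} \bigl[{}_a J^\rho h(u_{i}) - {}_a J^\rho h(u_{i-1})\bigr] = {}_a J^\rho h(b). \]
Because $\rho > 1$ makes $(b-t)^{\rho-1}$ bounded on $[a,b]$ and $h \in L^{1}$, the right-hand side is finite and independent of the partition; taking the supremum over $\mathcal{P}$ yields $V({}_a\mathfrak{J}^\rho F, [a,b]) \leq {}_a J^\rho h(b) < \infty$, which is the required bounded variation.

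The main obstacle, as I anticipate it, is simply the recognition of the telescoping identity. The restriction $\rho > 1$ is essential here: for $\rho < 1$ the monotonicity of the kernel reverses, the identity breaks, and the partition sum generally diverges (consider $\sum_{i}(u_{i}-u_{i-1})^{\rho}$ on refining equipartitions). I note in passing that the bounded-variation hypothesis on $F$ itself does not appear to enter this argument; the proof needs only integrable boundedness together with $\rho > 1$.
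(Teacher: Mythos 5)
Your proposal is correct, and it takes a genuinely different route from the paper's proof. The paper exploits convexity: it writes $F(u)=[\underline f(u),\overline f(u)]$, shows the endpoint functions are of bounded variation (hence bounded, via the BV hypothesis on $F$), proves that their Riemann--Liouville integrals are Lipschitz by differentiating under the integral sign (this is where $\rho>1$ enters), establishes the interval representation ${}_{a}\mathfrak{J}^{\rho}F(u)=\big[{}_{a}\mathfrak{J}^{\rho}\underline f(u),{}_{a}\mathfrak{J}^{\rho}\overline f(u)\big]$, and then bounds the variation by the sum of the variations of the two endpoint functions. You instead recycle the uniform estimate $H_d\big({}_{a}\mathfrak{J}^{\rho}F(u),{}_{a}\mathfrak{J}^{\rho}F(v)\big)\le\Phi(u,v)$ from the proof of Theorem \ref{conti}, note that for $\rho>1$ the kernel monotonicity lets you drop the absolute value so that $\Phi(u,v)={}_{a}J^{\rho}h(v)-{}_{a}J^{\rho}h(u)$ with ${}_{a}J^{\rho}h$ nondecreasing, and telescope; all the claimed identities check out, $h\ge 0$ is automatic from integrable boundedness with nonempty values, and ${}_{a}J^{\rho}h(b)\le (b-a)^{\rho-1}\|h\|_{L^1}/\Gamma(\rho)<\infty$. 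Your observation that neither the convexity of the values nor the BV hypothesis on $F$ is needed for the variation claim is accurate and is a genuine sharpening (the paper uses BV of $F$ only to deduce boundedness of $\underline f,\overline f$, and convexity only to get the interval structure). What the paper's longer route buys is the explicit interval formula, which it reuses later for the extremal selections, and the stronger conclusion that the endpoint functions are in fact Lipschitz; what your route buys is economy, weaker hypotheses, and the cleaner bound $V\big({}_{a}\mathfrak{J}^{\rho}F,[a,b]\big)\le {}_{a}J^{\rho}h(b)$ by domination with a single monotone function.
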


\begin{proof}
Since $F$ is Borel-measurable, integrably bounded and has nonempty compact
values, Theorem \ref{conti} (applied with $\rho>1$) yields that, for each $u\in[a,b]$, the set ${}_{a}\mathfrak{J}^{\rho}F(u)$ is nonempty and compact and that the mapping $u\mapsto {}_{a}\mathfrak{J}^{\rho}F(u)$ is continuous on $[a,b]$ with respect to $H_d$. Moreover, by Theorem \ref{convex}, since $F$ has convex values, each value ${}_{a}\mathfrak{J}^{\rho}F(u)$ is convex. Thus it remains to show that $u\mapsto {}_{a}\mathfrak{J}^{\rho}F(u)$ has bounded variation on $[a,b]$ with respect to $H_d$.
For each $u\in[a,b]$, the set $F(u)$ is a nonempty compact convex subset of $\mathbb{R}$, hence a closed interval. Define
\[\underline f(u):=\inf F(u),\qquad \overline f(u):=\sup F(u), \]
so that
\[ F(u)=[\underline f(u),\overline f(u)], \qquad u\in[a,b].
\]
Let $\mathcal{P}=\{a=u_0<u_1<\dots<u_n=b\}$ be a partition of $[a,b]$. Then for every $i=1,\dots,n$,
\begin{align*}
    H_d\big(F(u_i),F(u_{i-1})\big) & =
H_d\big([\underline f(u_i),\overline f(u_i)],
        [\underline f(u_{i-1}),\overline f(u_{i-1})]\big) \\&  = \max\Big\{|\underline f(u_i)-\underline f(u_{i-1})|,
          |\overline f(u_i)-\overline f(u_{i-1})|\Big\}.
\end{align*}
Consequently,
\[ \sum_{i=1}^{n}|\underline f(u_i)-\underline f(u_{i-1})|
\le \sum_{i=1}^{n}H_d\big(F(u_i),F(u_{i-1})\big), \]
and similarly for $\overline f$. Taking the supremum over all partitions
$\mathcal{P}$, we obtain
\[ V(\underline f,[a,b])\le V(F,[a,b]),\qquad V(\overline f,[a,b])\le V(F,[a,b])<\infty. \]
In particular, $\underline f$ and $\overline f$ are functions of bounded variation on $[a,b]$ and therefore are bounded on $[a,b]$. For $u\in[a,b]$ define
\[ \big({}_{a}\mathfrak{J}^{\rho}\underline f\big)(u)
:= \frac{1}{\Gamma(\rho)}\int_{a}^{u}(u-t)^{\rho-1}\underline f(t)dt, \qquad
\big({}_{a}\mathfrak{J}^{\rho}\overline f\big)(u)
:= \frac{1}{\Gamma(\rho)}\int_{a}^{u}(u-t)^{\rho-1}\overline f(t)dt. \]
Since $\underline f$ and $\overline f$ are bounded and integrable, the functions ${}_{a}\mathfrak{J}^{\rho}\underline f$ and
${}_{a}\mathfrak{J}^{\rho}\overline f$ are well defined. Now, we claim that both of them are Lipschitz , hence of bounded variation, on $[a,b]$.
Let $g$ be either $\underline f$ or $\overline f$, and set
\[
I_g(u):=\frac{1}{\Gamma(\rho)}\int_{a}^{u}(u-t)^{\rho-1}g(t)dt.
\]
Since $g$ is bounded on $[a,b]$, there exists $M>0$ such that $|g(t)|\le M$
for all $t\in[a,b]$. For $u\in(a,b]$, differentiating under the integral
sign (since $\rho>1$ and the kernel
$(u-t)^{\rho-2}$ is integrable on $[a,u]$) gives
$$I'_g(u)
=
\frac{\rho-1}{\Gamma(\rho)}
\int_{a}^{u}(u-t)^{\rho-2}g(t)dt.$$
Therefore,
\begin{align*}
    |I'_g(u)|
&\le
\frac{\rho-1}{\Gamma(\rho)}
\int_{a}^{u}(u-t)^{\rho-2}|g(t)|dt
\\& \le
\frac{\rho-1}{\Gamma(\rho)}M\int_{a}^{u}(u-t)^{\rho-2}dt \\& =\frac{M}{\Gamma(\rho)}(u-a)^{\rho-1}
\\& \le
\frac{M}{\Gamma(\rho)}(b-a)^{\rho-1}.
\end{align*}
Hence $I'_g$ is bounded on $(a,b]$ and extends continuously to $[a,b]$,
so $I_g$ is Lipschitz on $[a,b]$ with Lipschitz constant
\[
L_0:=\frac{M}{\Gamma(\rho)}(b-a)^{\rho-1}.
\]
Thus both ${}_{a}\mathfrak{J}^{\rho}\underline f$ and
${}_{a}\mathfrak{J}^{\rho}\overline f$ are Lipschitz (in fact with the same
constant $L_0$), and therefore they are functions of bounded variation on
$[a,b]$.

We now show that, for each $u\in[a,b]$,
\begin{equation}\label{EQ3}
{}_{a}\mathfrak{J}^{\rho}F(u)
=
\Big[
\big({}_{a}\mathfrak{J}^{\rho}\underline f\big)(u),
\big({}_{a}\mathfrak{J}^{\rho}\overline f\big)(u)
\Big].
\end{equation}
Let $f\in\mathfrak{F}$ be an integrable selection of $F$, so that
$\underline f(t)\le f(t)\le\overline f(t)$ for all $t\in[a,b]$. Since the
kernel $(u-t)^{\rho-1}/\Gamma(\rho)$ is non negative for $t\in[a,u]$, we obtain
\[
\big({}_{a}\mathfrak{J}^{\rho}\underline f\big)(u)
\le I_f(u)\le
\big({}_{a}\mathfrak{J}^{\rho}\overline f\big)(u),
\] 
which shows that every element of ${}_{a}\mathfrak{J}^{\rho}F(u)$ belongs
to the interval on the right-hand side of \eqref{EQ3}. On the other hand, $\underline f$ and $\overline f$ themselves are selections of
$F$, so the corresponding integrals
${}_{a}\mathfrak{J}^{\rho}\underline f(u)$ and
${}_{a}\mathfrak{J}^{\rho}\overline f(u)$ belong to
${}_{a}\mathfrak{J}^{\rho}F(u)$. Hence the interval in
\eqref{EQ3} is contained in ${}_{a}\mathfrak{J}^{\rho}F(u)$,
and equality follows.
Thus, for every $u\in[a,b]$,
\begin{equation*}
    {}_{a}\mathfrak{J}^{\rho}F(u)
=
\big[ A(u),B(u)\big],
\qquad \text{where}\quad
A(u):={}_{a}\mathfrak{J}^{\rho}\underline f(u),\quad
B(u):={}_{a}\mathfrak{J}^{\rho}\overline f(u).
\end{equation*}
In particular, ${}_{a}\mathfrak{J}^{\rho}F(u)$ is compact and convex. Let $\mathcal{P}=\{a=u_0<\dots<u_n=b\}$ be a partition of $[a,b]$. Then for each $i=1,\dots,n$,
\begin{align*}
    H_d\big({}_{a}\mathfrak{J}^{\rho}F(u_i),{}_{a}\mathfrak{J}^{\rho}F(u_{i-1})\big)
&= H_d\big([A(u_i),B(u_i)],[A(u_{i-1}),B(u_{i-1})]\big) \\& =\max\big\{|A(u_i)-A(u_{i-1})|,
         |B(u_i)-B(u_{i-1})|\big\}.
\end{align*}
Hence
$$\sum_{i=1}^{n}
H_d\big({}_{a}\mathfrak{J}^{\rho}F(u_i),{}_{a}\mathfrak{J}^{\rho}F(u_{i-1})\big)
\le
\sum_{i=1}^{n}\Big(|A(u_i)-A(u_{i-1})|
                   +|B(u_i)-B(u_{i-1})|\Big).$$
Taking the supremum over all partitions $\mathcal{P}$, we obtain
\[
V\big({}_{a}\mathfrak{J}^{\rho}F,[a,b]\big)
\le
V(A,[a,b])+V(B,[a,b])<\infty,
\]
since $A$ and $B$ are of bounded variation. Thus
$u\mapsto{}_{a}\mathfrak{J}^{\rho}F(u)$ is of bounded variation on $[a,b]$ with respect to $H_d$. This completes the proof.
\end{proof}

\begin{theorem} \label{LP}
Let $F:[a,b]\rightrightarrows\mathbb{R}$ be a set-valued mapping with nonempty compact values, Borel measurable and integrably bounded on $[a,b]$.
Assume that $F$ is Lipschitz on $[a,b]$ with respect to $H_d$ and that $\rho>1$. Then, for each $u\in[a,b]$, the set ${}_{a}\mathfrak{J}^{\rho}F(u)$ is nonempty and compact, and the mapping
\[
u \longmapsto {}_{a}\mathfrak{J}^{\rho}F(u)
\]
is Lipschitz on $[a,b]$ with respect to $H_d$.
\end{theorem}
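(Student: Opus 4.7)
The plan is to follow the strategy of Theorem \ref{BV} but to bypass the use of convex values entirely. Nonemptiness and compactness of ${}_{a}\mathfrak{J}^{\rho}F(u)$ for each $u\in[a,b]$ come for free from Theorem \ref{conti}, since a Lipschitz set-valued mapping is in particular continuous with respect to $H_{d}$, while Borel measurability and integrable boundedness are already in the hypotheses. So the real task is to produce a Lipschitz estimate for the set-valued integral with respect to the Hausdorff metric, and the natural idea is to show that the scalar Riemann--Liouville integral $I_{f}(u)$ is Lipschitz in $u$ with a constant that is uniform over all selections $f\in\mathfrak{F}$.

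The first step is to extract a uniform essential bound on selections. Since $F$ is Lipschitz with some constant $L$ and has nonempty compact values, $H_{d}\bigl(F(u),F(a)\bigr)\le L(b-a)$ for every $u\in[a,b]$, and because $F(a)$ is compact we may set $r:=\sup\{|x|:x\in F(a)\}<\infty$. Every $x\in F(u)$ then satisfies $|x|\le r+L(b-a)=:M$, so each $f\in\mathfrak{F}$ obeys $|f(t)|\le M$ for a.e.\ $t\in[a,b]$. The second step is to bound $|I_{f}(v)-I_{f}(u)|$ by a multiple of $|v-u|$ uniformly in $f\in\mathfrak{F}$. Taking $a\le u\le v\le b$ and using the same splitting employed in Theorem \ref{conti},
\[
I_{f}(v)-I_{f}(u)=\frac{1}{\Gamma(\rho)}\int_{a}^{u}\bigl[(v-t)^{\rho-1}-(u-t)^{\rho-1}\bigr]f(t)\,dt+\frac{1}{\Gamma(\rho)}\int_{u}^{v}(v-t)^{\rho-1}f(t)\,dt,
\]
together with $|f|\le M$ and the inequality $(v-t)^{\rho-1}\ge(u-t)^{\rho-1}$ (valid for $\rho>1$), the right-hand side telescopes to $\tfrac{M}{\rho\,\Gamma(\rho)}\bigl[(v-a)^{\rho}-(u-a)^{\rho}\bigr]$. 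A single application of the mean value theorem to $s\mapsto(s-a)^{\rho}$ then gives
\[
|I_{f}(v)-I_{f}(u)|\le L_{0}\,|v-u|,\qquad L_{0}:=\frac{M(b-a)^{\rho-1}}{\Gamma(\rho)},
\]
with $L_{0}$ independent of $f$. Equivalently one can differentiate $I_{f}$ under the integral sign, as is done in Theorem \ref{BV}, to obtain the same bound.

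The final step transfers this uniform scalar Lipschitz bound to a Lipschitz bound on the set-valued integral in $H_{d}$. The crucial observation, and the only point that distinguishes the present situation from Theorem \ref{BV}, is that one may take the \emph{same} selection on both sides of the matching: for any $y\in{}_{a}\mathfrak{J}^{\rho}F(u)$, write $y=I_{f}(u)$ with $f\in\mathfrak{F}$, and pair it with $z:=I_{f}(v)\in{}_{a}\mathfrak{J}^{\rho}F(v)$; this already gives $|y-z|\le L_{0}|u-v|$. Exchanging the roles of $u$ and $v$ yields the symmetric estimate, so both suprema in the definition of $H_{d}$ are at most $L_{0}|u-v|$ and the theorem follows with Lipschitz constant $L_{0}$. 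I expect no serious obstacle in this plan; the only place where the Lipschitz hypothesis on $F$ is actually used is the preliminary extraction of the uniform bound $M$ (for which compactness of the values is needed as well), and the rest of the argument reduces to the kernel estimate that already powers Theorems \ref{conti} and \ref{BV}.
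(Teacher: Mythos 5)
Your proposal is correct and follows essentially the same route as the paper's proof: a uniform bound $M$ on all selections extracted from the Lipschitz hypothesis together with compactness of a single value $F(u_0)$, a uniform Lipschitz constant $L_{0}=M(b-a)^{\rho-1}/\Gamma(\rho)$ for every $I_{f}$, and the same-selection matching $y=I_{f}(u)\mapsto z=I_{f}(v)$ to transfer this bound to the Hausdorff distance. The only cosmetic difference is that you derive $|I_{f}(v)-I_{f}(u)|\le L_{0}|v-u|$ by splitting the kernel and applying the mean value theorem to $s\mapsto(s-a)^{\rho}$, whereas the paper differentiates under the integral sign; both routes give the identical constant.
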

    \begin{proof}
By Theorem \ref{conti} (with $\rho>1$), for each $u\in[a,b]$ the set ${}_{a}\mathfrak{J}^{\rho}F(u)$ is nonempty and compact. Thus, it remains to show that the mapping $u\mapsto{}_{a}\mathfrak{J}^{\rho}F(u)$ is Lipschitz with respect to $H_d$.     
Let $L_F\ge0$ be a Lipschitz constant of $F$ with respect to $H_d$, that is
$$H_d\big(F(u),F(v)\big)\le L_F|u-v|
\quad\text{for all }u,v\in[a,b].$$

To prove the Lipschitz continuity of ${}_{a}\mathfrak{J}^{\rho}F$, we first  establish a uniform bound on the values of $F$. Fix $u_{0}\in[a,b]$ and  choose any $x_{0}\in F(u_{0})$. Since $F(u_{0})$ is compact, it is bounded,  and therefore 
\[
R_{0}:=\sup\{\,|x|:\,x\in F(u_{0})\,\}<\infty.
\]
For arbitrary $u\in[a,b]$ and $x\in F(u)$,by the definition
of the Hausdorff distance there exists $y\in F(u_0)$ such that
$$|x-y|\le H_d\big(F(u),F(u_0)\big).$$
Because the Lipschitz continuity of $F$, we have
$$|x-y|\le H_d\big(F(u),F(u_0)\big)\le L_F|u-u_0|\le L_F(b-a).$$
Since $y\in F(u_{0})$, we have $|y|\le R_{0}$, and hence
\[
|x|\le |y|+|x-y|\le R_{0}+L_{F}(b-a)=:M.
\]
Thus,
\[
\sup\{\,|x|:\,x\in F(u)\,\}\le M
\qquad\text{for all }u\in[a,b].
\]
Consequently, every integrable selection $f$ of $F$ satisfies
\begin{equation}\label{EQ4}
|f(t)|\le M
\qquad\text{for all }t\in[a,b].
\end{equation}

For such a selection $f$, define
$$
I_f(u):=\frac{1}{\Gamma(\rho)}\int_{a}^{u}(u-t)^{\rho-1}f(t)dt,
\qquad u\in[a,b].
$$
As in the proof of the above theorem,  since $\rho>1$ and $f$ is bounded, we may differentiate under the integral sign to obtain, for
$u\in(a,b]$,
$$
I_f'(u)
=
\frac{\rho-1}{\Gamma(\rho)}
\int_{a}^{u}(u-t)^{\rho-2}f(t)dt.
$$
Using \eqref{EQ4}
$$|I_f'(u)|
\le
\frac{\rho-1}{\Gamma(\rho)}M\int_{a}^{u}(u-t)^{\rho-2}dt
=
\frac{M}{\Gamma(\rho)}(u-a)^{\rho-1}
\le
\frac{M}{\Gamma(\rho)}(b-a)^{\rho-1}
=:L_0.$$
Hence $I_f$ is Lipschitz on $[a,b]$ with Lipschitz constant at most $L_0$, and the constant $L_0$ is independent of the particular selection $f$.
Now let $u,v\in[a,b]$ and $f\in\mathfrak{F}$ be an integrable selection of
$F$. Then
\[
|I_f(u)-I_f(v)|\le L_0|u-v|.
\]
If $y\in {}_{a}\mathfrak{J}^{\rho}F(u)$, then $y=I_f(u)$ for some
$f\in\mathfrak{F}$, and taking $z:=I_f(v)\in{}_{a}\mathfrak{J}^{\rho}F(v)$
we obtain
\[
|y-z|\le L_0|u-v|.
\]
Therefore
\[
\sup_{y\in {}_{a}\mathfrak{J}^{\rho}F(u)}
\inf_{z\in {}_{a}\mathfrak{J}^{\rho}F(v)}|y-z|
\le L_0|u-v|.
\]
By symmetry (interchanging $u$ and $v$), we also have
\[
\sup_{z\in {}_{a}\mathfrak{J}^{\rho}F(v)}
\inf_{y\in {}_{a}\mathfrak{J}^{\rho}F(u)}|z-y|
\le L_0|u-v|.
\]
Hence, by the definition of the Hausdorff distance,
\[
H_d\big({}_{a}\mathfrak{J}^{\rho}F(u),
        {}_{a}\mathfrak{J}^{\rho}F(v)\big)
\le L_0|u-v|
\quad\text{for all }u,v\in[a,b],
\]
which shows that $u\mapsto{}_{a}\mathfrak{J}^{\rho}F(u)$ is Lipschitz on $[a,b]$ with respect to $H_d$. This completes the proof.
\subsection*{Exitence of regular selections:}
\end{proof}
\begin{theorem} \label{Maintheorem}
    Let $F:[a,b]\rightrightarrows\mathbb{R}$ be a set valued mapping with nonempty
compact values, Borel measurable and integrably bounded on $[a,b]$, and let
$\rho>1$. Let ${}_{a}\mathfrak{J}^{\rho}F$ be the Riemann-Liouville set valued fractional integral of $F$. Then the following hold.
    \begin{itemize}
        \item[(a)] If $F$ has compact convex values and is of bounded variation on $[a,b]$ with respect to $H_d$, then the mapping
$$ u \longmapsto {}_{a}\mathfrak{J}^{\rho}F(u)$$ admits a continuous selection which is itself of bounded variation.
        \item[(b)] If $F$ is Lipschitz on $[a,b]$ with respect to $H_d$, then the mapping
$$ u \longmapsto {}_{a}\mathfrak{J}^{\rho}F(u)$$ admits a Lipschitz selection.
    \end{itemize}
\end{theorem}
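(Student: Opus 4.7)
The plan is that both parts reduce to selecting a single suitable integrable selection of $F$ and pushing its regularity through the Riemann--Liouville integral; the analytical estimates needed are essentially those already derived in the proofs of Theorems \ref{BV} and \ref{LP}, so no genuinely new machinery is required.

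For part (a), since $F$ has compact convex values in $\mathbb{R}$, each $F(t)$ is a closed interval $[\underline{f}(t),\overline{f}(t)]$ with $\underline{f}(t):=\inf F(t)$ and $\overline{f}(t):=\sup F(t)$. A standard measurable-selection argument, relying on Borel measurability and compactness of the values, yields that $\underline{f}$ is Lebesgue measurable; integrable boundedness then makes it an integrable selection of $F$. Define
\[
\sigma(u):=\frac{1}{\Gamma(\rho)}\int_{a}^{u}(u-t)^{\rho-1}\underline{f}(t)\,dt.
\]
By the representation \eqref{EQ3} derived inside the proof of Theorem \ref{BV}, $\sigma(u)$ is precisely the left endpoint of ${}_{a}\mathfrak{J}^{\rho}F(u)$, so $\sigma$ is a genuine single-valued selection of $u\mapsto{}_{a}\mathfrak{J}^{\rho}F(u)$. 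The differentiation-under-the-integral computation in that same proof, using $\rho>1$ together with the boundedness of $\underline{f}$ inherited from the bounded variation of $F$, shows that $\sigma$ is Lipschitz on $[a,b]$ with constant $L_{0}=M(b-a)^{\rho-1}/\Gamma(\rho)$. Since Lipschitz continuity on a compact interval implies both continuity and bounded variation, (a) follows.

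For part (b), the interval representation is unavailable, so I instead fix any single integrable selection. Borel measurability of $F$ makes its graph analytic, so Proposition \ref{Prop1} provides a Lebesgue-measurable selection $f$ of $F$, and integrable boundedness ensures $f\in L^{1}([a,b])$. The Lipschitz hypothesis on $F$ together with the compactness of $F(u_{0})$, exactly as in the preliminary uniform-bound estimate of Theorem \ref{LP}, supplies a constant $M$ with $|f(t)|\le M$ on $[a,b]$. Setting
\[
\sigma(u):=\frac{1}{\Gamma(\rho)}\int_{a}^{u}(u-t)^{\rho-1}f(t)\,dt,
\]
the differentiation argument of Theorem \ref{LP} bounds $|\sigma'(u)|$ by $M(b-a)^{\rho-1}/\Gamma(\rho)$, so $\sigma$ is Lipschitz, and $\sigma(u)\in{}_{a}\mathfrak{J}^{\rho}F(u)$ by construction.

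The only mildly delicate point is verifying measurability of the endpoint function $\underline{f}$ in (a) (and the choice of a concrete Lipschitz constant that is independent of the selection in (b), which is already supplied by Theorem \ref{LP}); once these are in hand, both assertions are essentially repackagings of the Lipschitz estimates already proved, and no further analytic obstacle arises.
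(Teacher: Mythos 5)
Your proof is correct, but it takes a genuinely different route from the paper. The paper works ``outside in'': it first establishes the regularity of the integrated multifunction $G(u)={}_{a}\mathfrak{J}^{\rho}F(u)$ (Theorems \ref{conti}, \ref{BV}, \ref{LP}) and then invokes the selection principle of Belov and Chistyakov \cite{SAVV} to extract a regular selection of $G$, obtaining as a byproduct the quantitative bounds $V(g,[a,b])\le V(G,[a,b])$ and $\operatorname{Lip}(g)\le\operatorname{Lip}(G)$. You work ``inside out'': you choose one concrete integrable selection of $F$ first --- the endpoint function $\underline{f}=\inf F$ in (a), an arbitrary measurable selection in (b) --- and let the Riemann--Liouville operator transport its regularity, reusing the differentiation-under-the-integral estimates already proved in Theorems \ref{BV} and \ref{LP}. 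Your construction is more elementary (no external selection theorem is needed), fully explicit, and in case (a) actually delivers a \emph{stronger} conclusion, namely a Lipschitz selection rather than merely a continuous one of bounded variation, since $\underline{f}$ is bounded (being of bounded variation) and $\rho>1$. The measurability worry you flag for $\underline{f}$ dissolves immediately: a real function of bounded variation on $[a,b]$ has at most countably many discontinuities, hence is Borel measurable and bounded, hence an integrable selection; the paper itself uses $\underline f,\overline f\in\mathfrak{F}$ in the proof of \eqref{EQ3} without further comment. What the paper's route buys in exchange is generality and structure: the Belov--Chistyakov argument applies to any multifunction with the stated Hausdorff-metric regularity, independently of its representation as a fractional integral, and (as the paper's remark notes) part (a) of their argument does not rely on the interval structure of the values beyond what is needed to prove Theorem \ref{BV}, whereas your part (a) uses the endpoint representation directly.
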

\begin{proof} Now, the Theorems \ref{THnonempty}, \ref{conti}, \ref{BV}, \ref{LP} and Theorem 3 of
Belov and Chistyakov \cite{SAVV} will be used in the proof of this theorem.\\
Let $\rho>1$ and let $F:[a,b]\rightrightarrows\mathbb{R}$ satisfy the assumptions of Theorem \ref{Maintheorem}.  Define the set-valued mapping
$$G(u):={}_{a}\mathfrak{J}^{\rho}F(u), \qquad u\in[a,b].$$ By Theorem \ref{THnonempty} for each $u\in[a,b]$ the set $G(u)$ is nonempty and  compact. 
\begin{itemize}
    \item[(a)] From Theorems \ref{conti} and \ref{BV}, $G$ is continuous and of bounded variation on $[a,b]$ with respect to Hausdorff distance $H_d$. Theorem 3 of Belov and Chistyakov \cite{SAVV} guarantees the existence of a regular selection $g:[a,b]\to\mathbb{R}$ of $G$, that is,
$$g(u)\in G(u)={}_{a}\mathfrak{J}^{\rho}F(u)
\quad\text{for all }u\in[a,b],$$
and $g$ is continuous and of bounded variation on $[a,b]$ with 
$$V(g,[a,b]) \le V(G,[a,b]).$$

    \item[(b)] From Theorem \ref{LP}, $G$ is Lipschitz on $[a,b]$ with respect to the Hausdorff metric $H_g$.  Theorem 3 of Belov and Chistyakov \cite{SAVV} yields a 
Lipschitz selection $g:[a,b]\to\mathbb{R}$ of $G$, i.e.,
$$g(u)\in G(u)={}_{a}\mathfrak{J}^{\rho}F(u)
\quad\text{for all }u\in[a,b],$$
 with
$$\operatorname{Lip}(g)\ \le\ \operatorname{Lip}(G).$$
\end{itemize}
This completes the proof.
\end{proof}
\begin{remark}
The convexity of the values of $F$ is only used in the proof of the bounded variation property of the Riemann–Liouville fractional integral in Theorem \ref{BV}. The subsequent existence of regular selections via the result of Belov and Chistyakov \cite{SAVV} does not require convexity.
\end{remark}
\begin{theorem}[Extremal selections of ${}_{a}\mathfrak{J}^{\rho}F$]

Let $F:[a,b]\rightrightarrows\mathbb{R}$ be a set-valued mapping with nonempty compact values, and let $\rho>0$. For each $u\in[a,b]$, let
$$G(u):={}_{a}\mathfrak{J}^{\rho}F(u).$$
By Theorem \ref{THnonempty}, $G(u)$ is a nonempty compact subset of $\mathbb{R}$ for each $u\in[a,b]$. Define the extremal selections
$$ g_{-}(u):=\min G(u),  \qquad g_{+}(u):=\max G(u), \qquad u\in[a,b]. $$
Then the following assertions hold:
\begin{itemize}
    \item[(a)] Under the assumptions of Theorem \ref{conti},  $G$ is continuous on $[a,b]$, and both $g_{-}$ and $g_{+}$ are continuous  selections of $G$ on $[a,b]$.
    
    \item[(b)] Under the assumptions of Theorem \ref{BV} (in particular $\rho>1$) 
$G$ is of bounded variation on $[a,b]$, and both $g_{-}$ and $g_{+}$ are of 
bounded variation on $[a,b]$. Moreover,
$$ V\big(g_{-},[a,b]\big)\le V\big(G,[a,b]\big), \qquad V\big(g_{+},[a,b]\big)\le V\big(G,[a,b]\big). $$
\item[(c)] Under the assumptions of Theorem \ref{LP} (in particular $\rho>1$), $G$ is 
Lipschitz on $[a,b]$, and both $g_{-}$ and $g_{+}$ are Lipschitz on $[a,b]$. 
In addition,
$$ \operatorname{Lip}(g_{-})\le \operatorname{Lip}(G), \qquad \operatorname{Lip}(g_{+})\le\operatorname{Lip}(G).
$$
\end{itemize}
\end{theorem}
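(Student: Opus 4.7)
The plan is to reduce the entire theorem to the following elementary inequality: for any nonempty compact sets $A, B \subset \mathbb{R}$,
\[
|\min A - \min B| \leq H_d(A,B), \qquad |\max A - \max B| \leq H_d(A,B).
\]
Once this is at hand, each of (a), (b), (c) is an immediate pointwise consequence of the corresponding regularity property of $G(u) := {}_{a}\mathfrak{J}^{\rho}F(u)$ already proved in Theorems \ref{conti}, \ref{BV}, and \ref{LP}. Note that $g_-(u)$ and $g_+(u)$ are automatically selections of $G$, since $\min G(u), \max G(u) \in G(u)$ by compactness.

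To establish the inequality, I would set $d := H_d(A,B)$, $\alpha := \min A$, $\beta := \min B$, and use the definition of the Hausdorff distance to pick $b' \in B$ with $|\alpha - b'| \leq d$. Since $\beta = \min B \leq b'$, we get $\beta - \alpha \leq b' - \alpha \leq d$; swapping the roles of $A$ and $B$ gives $\alpha - \beta \leq d$, hence $|\alpha - \beta| \leq d$. The argument for $\max$ is symmetric, choosing $a' \in A$ with $|\max B - a'| \leq d$ and using $\max A \geq a'$.

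From here, part (a) follows at once: under the hypotheses of Theorem \ref{conti}, the values $G(u)$ are nonempty compact (so $g_{\pm}$ are well-defined) and $G$ is $H_d$-continuous, so $|g_{\pm}(u) - g_{\pm}(u_0)| \leq H_d(G(u), G(u_0)) \to 0$ as $u \to u_0$. For (b), I would apply the inequality partition-wise: for any partition $\mathcal{P} = \{a = u_0 < \cdots < u_n = b\}$,
\[
\sum_{i=1}^{n} |g_{\pm}(u_i) - g_{\pm}(u_{i-1})| \leq \sum_{i=1}^{n} H_d\big(G(u_i), G(u_{i-1})\big) \leq V(G, [a,b]),
\]
and taking the supremum over $\mathcal{P}$ yields $V(g_{\pm}, [a,b]) \leq V(G, [a,b]) < \infty$ by Theorem \ref{BV}. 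Part (c) is analogous: if $G$ has Lipschitz constant $L = \operatorname{Lip}(G)$ from Theorem \ref{LP}, then $|g_{\pm}(u) - g_{\pm}(v)| \leq H_d(G(u), G(v)) \leq L|u-v|$, so $\operatorname{Lip}(g_{\pm}) \leq \operatorname{Lip}(G)$.

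The only delicate point, though it is minor, is the proof of the min/max Hausdorff inequality itself, where one has to be careful with the two-sided nature of $H_d$ and the sign convention when passing from the auxiliary point $b'$ to the extremal $\min B$ (and similarly for $\max$). Everything else is a mechanical transfer of the regularity of $G$ to its extremal selections, and no new analytic input beyond the earlier theorems is required.
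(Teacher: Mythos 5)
Your proposal is correct and follows essentially the same route as the paper: the paper's own proof simply asserts that the regularity of $g_{\pm}$ ``follows from the corresponding regularity of $G$ and standard properties of the Hausdorff distance on $\mathbb{R}$,'' and your key inequality $|\min A-\min B|\le H_d(A,B)$, $|\max A-\max B|\le H_d(A,B)$ is exactly that standard property, correctly proved and correctly transferred pointwise, partition-wise, and via the Lipschitz bound. If anything, your write-up supplies the details the paper leaves implicit.
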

    \begin{proof}
By Theorem \ref{THnonempty}, each $G(u)$ is a nonempty compact subset of $\mathbb{R}$, hence $\min G(u)$ and $\max G(u)$ are well defined and yield selections $g_{-}$ and $g_{+}$. The continuity, bounded variation, and Lipschitz regularity of $g_{-}$ and $g_{+}$ follow directly from the corresponding regularity of $G$ (from Theorems \ref{conti}, \ref{BV} and \ref{LP}) and standard properties of the Hausdorff distance on $\mathbb{R}$.
\end{proof}

\section*{\textbf{Application}}
One of the important applications of research presented in this paper is in the area of differential inclusion. In order to prove existence and uniqueness, one can directly utilize the selection theorem.   \\
 Let $\alpha\in(1,2)$ and consider the Caputo fractional
differential inclusion
\begin{equation}\label{CPEQN}
{}^{C}D^{\alpha}u(t)\in F(t,u(t))
\quad\text{for a.e. }t\in[t_{0},T],
\qquad
u(t_{0})=u_{0},\quad u'(t_{0})=u_{1},
\end{equation}
where ${}^{C}D^{\alpha}$ denotes the Caputo derivative of order $\alpha$ and
$F:[t_{0},T]\times\mathbb{R}\rightrightarrows\mathbb{R}$ is a set-valued
mapping with nonempty compact values.

 The existence of a solution  usually requires that $F(t, u)$ be measurable in $t$, upper hemicontinuous function of $u$, and $F(t, u)$ a closed, convex set for all $(t,u)$. The uniqueness requires the Lipschitz condition on $F$, i.e. Hausdorff distance
 $$H_d(F(t, u_1), F(t, u_2)) \leq L \|u_1 - u_2\|$$ for some $L > 0$. For more details, we refer to \cite{incl}. The existence of a solution is equivalent to looking for fixed points of the corresponding integral form. We look for a solution in the set $$_{a}\mathfrak{J}^{\rho}F(t)=\left\{\frac{1}{\Gamma(\rho)}\int_{a}^{t}(t-s)^{\rho-1}f(s)~\mathrm{d}s : f\in \mathfrak{F}\right\}.$$ 
 It is well known that problem \eqref{CPEQN} is equivalent to
the integral inclusion
\begin{equation}\label{INGEQN}
u(t)\in u_{0}+u_{1}(t-t_{0})
+\frac{1}{\Gamma(\alpha)}
\int_{t_{0}}^{t}(t-s)^{\alpha-1}v(s)\,ds,
\qquad v(s)\in F(s,u(s))\ \text{a.e.}
\end{equation}
for $t\in[t_{0},T]$.

Setting $\rho=\alpha>1$, the integral term in \eqref{INGEQN}
coincides with the Riemann-Liouville fractional integral of order $\rho$
applied to a selection of the mapping $s\mapsto F(s,u(s))$. By using the results proved, it is evident that a selection exists and hence a solution exists. Hence, we can directly apply the results on the problem of differential inclusions of fractional order to prove the existence and uniqueness of the solution.
\\
\section{Future remarks}
The computation of fractal dimensions has long been a central and fascinating topic in fractal theory. In the past few years, the study of fractal dimension (such as box dimension and the Hausdorff dimension) of the graph of the integral of a function is getting the attention of the researchers. For example, Liang \cite{L1} introduced the box dimension of the graph of the Riemann-Liouville fractional integral of a continuous function of bounded variation on a closed bounded interval. Subsequently, several significant developments in this direction have been reported; see, for instance, \cite{SS, SSS, SS1, L2, MV, SV,SV1, BY, BY1}. Motivated by these advances, it would be natural and interesting to investigate analogous dimensional properties for set-valued Riemann–Liouville fractional integrals, including the fractal dimensions of their graphs under suitable regularity assumptions on the underlying set-valued mappings.
\subsection*{Acknowledgements} The idea of this work originated during the workshop on Fractals: GAANA 2025 organized by IIIT Prayagraj.

\bibliographystyle{amsplain}

 \end{document}